\title{$N$-sociable Heronian triangles}
\author{Nart Shalqini\thanks{
                  Franklin and Marshall College}}
\newtheorem{theorem}{Theorem}
\newtheorem{lemma}{Lemma}
\newtheorem{definition}{Definition}
\newenvironment{proof}{{\sc Proof:}}{~\hfill QED}
\begin{document}
\newpage
\maketitle

\begin{abstract}
A \emph{Heronian} triangle is a triangle that has integer side lengths and integer area. Praton and Shalqini \cite{Praton} define amicable Heronian triangles to be two Heronian triangles where the area of one equals the perimeter of the other, and vice versa; analogous to the concept of amicable numbers. In this paper, we generalize this notion to $n$\emph{-sociable} Heronian triangles and characterize them for each $n$.
\end{abstract}

\section*{Introduction}
A \emph{Heronian} (or \emph{Heron}) triangle is a triangle that has integer side lengths and integer area. The name comes from the ancient Greek mathematician Heron of Alexandria, who first provided the formula for the area of a triangle solely in terms of its side lengths,
\begin{align}
A= \sqrt{s(s-a)(s-b)(s-c)},   
\end{align}
where $a, b, c$ are the side lengths of the triangle and $s$ is the semi-perimeter i.e., $s=(a+b+c)/2$. We define a Heronian triangle to be \emph{equable} if its area equals its perimeter. It turns out that there are only five equable triangles \cite{nth}, namely the triangles with side lengths $ (5, 12, 13), (6, 8, 10), (6, 25, 29), (7, 15, 20)$, and $(9, 10, 17)$. The last equable triangle will be of particular interest to us. 

Equable triangles are sometimes called \emph{superhero} triangles as featured in \cite{numberphile}. They are analogous to the concept of perfect numbers; numbers that are equal to the sum of their proper divisors (aliquot sum), for example $6$ and $28$. Praton and Shalqini \cite{Praton} define an \emph{amicable} pair of triangles in similar fashion to \emph{amicable} numbers, which are numbers whose aliquot sum equal each other. The smallest pair of amicable numbers is $(220, 284)$. Although there are finitely many equable and amicable Heronian triangles \cite{Praton}, the case for perfect and amicable integers remains an open question.

The next natural thing that comes after perfect and amicable numbers are \emph{sociable} numbers. An $n$\emph{-sociable} cycle is a set of numbers whose aliquot sums form a cyclic sequence. In other words, a set of integers $a_1, a_2, \dots, a_n$ is an $n$\emph{-sociable} cycle if $s(a_1)=a_2, s(a_2)=a_3, \dots, s(a_n)=a_1$ where $s(a_i)$ represents the aliquot sum of $a_i$. Note that the amicable numbers are a special case that occurs when $n=2$. So far, research has largely focused on perfect and amicable numbers, and not much is known about sociable numbers. In this paper we define $n$\emph{-sociable} Heronian triangles analogously to $n$\emph{-sociable} numbers. Surprisingly, the complete characterization of $n$\emph{-sociable} Heronian triangles turns out be possible by only using elementary techniques.

\section*{Definition and proofs}

\begin{definition}
Suppose we have $n$ Heronian triangles $H_1, H_2, \dots, H_n$ such that at least one of them is not equable. We say that this set is an  $n$\emph{-sociable} Heronian cycle if $A_1 = p_2, A_2 = p_3, \dots, A_n=p_1$ where $A_i$ and $p_i$ are the area and perimeter of $H_i$, respectively. Moreover, for each $k$ such that $1 \leq k \leq n$ the Heronian cycles  $H_1, H_2, \dots, H_n$ and $H_k, H_{k+1}, \dots, H_{n-k}$ are equivalent.
\end{definition}

\begin{figure}[hbt]
    \centering
    \includegraphics[width=4.5in]{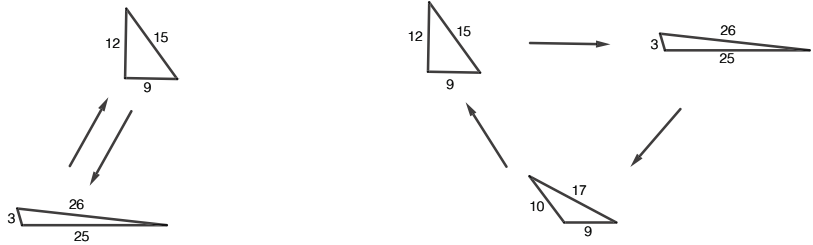}
    \caption{Left: the unique amicable pair. Right: a $3$\emph{-sociable} cycle. }
    \label{exp}
\end{figure}

The amicable pair defined in \cite{Praton} is equivalent to a $2$\emph{-sociable} Heronian cycle. Furthermore, \cite{Praton} shows that there is a unique such pair of triangles; namely, the triangles $H_1: (9, 12, 15)$ and $H_2 :(3, 25, 26)$ where $A_1=54=p_2$ and $A_2=36=p_1$. In this paper, we give a complete characterization of $n$\emph{-sociable} Heronian cycles for all $n$. We show that all $n$\emph{-sociable} cycles are only composed of a combination of our amicable pairs with the equable triangle $(9, 10, 17)$. This in turn implies that there are finitely many such cycles for each $n$.  We begin by establishing a series of lemmas and theorems necessary for our proof.
     
\begin{lemma}\label{lm1}
     Each $n$\emph{-sociable} Heronian cycle contains a triangle whose perimeter is strictly greater than its area.
\end{lemma}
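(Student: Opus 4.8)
The plan is to argue by contradiction. Suppose the cycle $H_1,\dots,H_n$ contains no triangle whose perimeter exceeds its area; that is, $A_i \ge p_i$ for every $i$ with $1 \le i \le n$. I will combine this with the defining relations $A_1 = p_2$, $A_2 = p_3$, \dots, $A_n = p_1$ of an $n$-sociable cycle, reading all indices cyclically (so $p_{n+1} := p_1$).

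First I would rewrite each inequality $A_i \ge p_i$ using the cycle relation $A_i = p_{i+1}$, obtaining $p_{i+1} \ge p_i$ for all $i$. Chaining these around the cycle yields $p_1 \le p_2 \le \cdots \le p_n \le p_1$, which forces $p_1 = p_2 = \cdots = p_n$.

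Next I would feed this equality back into the cycle relations: since $A_i = p_{i+1} = p_i$ for every $i$, each triangle $H_i$ has area equal to its perimeter, hence is equable. This contradicts the requirement in the definition of an $n$-sociable Heronian cycle that at least one of the $H_i$ is not equable, which completes the proof.

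There is no real obstacle in this argument; the only point needing care is the bookkeeping of indices modulo $n$, so that the chain of inequalities genuinely closes up — it is precisely the wrap-around step $A_n = p_1 \ge p_n$ that turns a monotone chain into a cyclic one and collapses all the perimeters.
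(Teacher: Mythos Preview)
Your proof is correct and follows essentially the same approach as the paper: both arguments chain the inequalities $A_i \ge p_i$ together with the cycle relations $A_i = p_{i+1}$ to force all perimeters (and hence all $A_i$ and $p_i$) to coincide, contradicting the requirement that some $H_i$ be non-equable. The only cosmetic difference is that the paper frames it as a case split (fixing a non-equable $H_1$ and showing $A_1 < p_1$ if no other $H_j$ satisfies $p_j > A_j$), whereas you phrase it as a global contradiction.
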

\begin{proof}
    Let $H_1, H_2, \dots, H_n$ be a cycle of $n$\emph{-sociable} Heronian triangles. By definition, there exists some $H_i$ such that $A_i \neq p_i$. Without loss of generality, we can assume that $i=1$. If we have some other triangle $H_j$ ($j \neq 1$) such that $p_j > A_j$ then we are done. If not, then $A_1 = p_2 \leq A_2 = p_3 \leq A_3 = \dots \leq A_{n} = p_{1} $. This implies that $A_1 < p_1$ since $H_1$ is not equable, proving our claim.
\end{proof}

\begin{lemma}\label{lm2}
     Let $H$ be a Heronian triangle in an $n$\emph{-sociable} Heronian cycle with perimeter $p$ and area $A$. Then $A^2/p$ is an integer.
\end{lemma}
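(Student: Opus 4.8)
The plan is to reduce the claim to a parity statement via Heron's formula and then exploit the cyclic structure. Writing $s=p/2$ for the semi-perimeter of $H$ and $a,b,c$ for its sides, Heron's formula gives $A^2=s(s-a)(s-b)(s-c)$, so that
\[
\frac{A^2}{p}=\frac{s(s-a)(s-b)(s-c)}{2s}=\frac{(s-a)(s-b)(s-c)}{2}.
\]
Hence it suffices to show that $(s-a)(s-b)(s-c)$ is even. I would first record that $s$ is in fact an integer: if $p=a+b+c$ were odd, then $b+c-a$, $c+a-b$, $a+b-c$ would all be odd, and from $16A^2=p\,(b+c-a)(c+a-b)(a+b-c)$ the right-hand side would be odd while the left-hand side is divisible by $16$, a contradiction. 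So $p$ is even and $s\in\mathbb{Z}$; the same argument shows that every Heronian triangle has even perimeter.

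Next I would bring in the sociable hypothesis. Since $H$ lies in an $n$\emph{-sociable} cycle, its area $A$ equals the perimeter of the next triangle in the cycle, and by the observation just made that perimeter is even. Therefore $A$ is even and $4\mid A^2$, which together with $A^2=s(s-a)(s-b)(s-c)$ yields $4\mid s(s-a)(s-b)(s-c)$.

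Finally I would run a short parity argument. Suppose for contradiction that $(s-a)(s-b)(s-c)$ is odd; then $s-a$, $s-b$, $s-c$ are each odd, so their sum $(s-a)+(s-b)+(s-c)=3s-2s=s$ is odd as well. But then $s(s-a)(s-b)(s-c)$ is a product of four odd integers, hence odd, contradicting $4\mid s(s-a)(s-b)(s-c)$. Therefore $(s-a)(s-b)(s-c)$ is even, and $A^2/p=(s-a)(s-b)(s-c)/2$ is an integer.

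I expect the only genuine subtlety to be recognizing that the cyclic hypothesis is really needed: for an arbitrary Heronian triangle with odd semi-perimeter, all three of $s-a,s-b,s-c$ can be odd and $A^2/p$ then fails to be an integer, so the essential input is that $A$ is itself a Heronian perimeter and hence even. The remaining work is routine manipulation of Heron's formula together with elementary parity considerations, and if the fact that a Heronian triangle has even perimeter has not been stated earlier it may be worth isolating it as a preliminary remark.
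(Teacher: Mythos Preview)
Your argument is correct and follows essentially the same route as the paper: reduce $A^2/p$ to $(s-a)(s-b)(s-c)/2$, show that every Heronian perimeter is even (hence $s\in\mathbb{Z}$), use the cycle condition to force $A$ even, and then finish with the parity observation that if all of $s-a,s-b,s-c$ were odd then $s$ would be odd too and $A^2=s(s-a)(s-b)(s-c)$ could not be divisible by $4$. The only cosmetic difference is that the paper phrases the first step via half-integers in $s,x,y,z$ rather than your integer identity $16A^2=p(b+c-a)(c+a-b)(a+b-c)$, and states the final implication more tersely; your version is a bit more explicit but otherwise identical.
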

\begin{proof}
    First we show that for any Heronian triangle, the semi-perimeter is an integer. Let $a,b,c$ be the side lengths of $H$. Suppose $s=(a+b+c)/2$. We let $x=s-a, y=s-b$ and $z=s-c$. We can further suppose $z \geq y \geq x$. Note that $s=x+y+z$, and that $x+y=c$, $x+z=b$ and $y+z=a$. By Heron's formula, $\sqrt{sxyz}= A$ which implies $sxyz=A^2 \in \mathbb{N}$. Note that $s$ must be an integer. Otherwise $s$ would be a half-integer, meaning that $x, y, z$ are all half integers. This implies that $sxyz$ equals an odd integer divided by $16$; an impossibility given the area is an integer. 
    
    Assuming $H$ is in an $n$\emph{-cycle}, we know that $A=p'$ where $p'$ is the perimeter of the proceeding Heronian triangle. Since $p'=2s'$ and $s'$ is an integer, then $A$ is even. This implies at least one of $x,y,z$ is even. Therefore
    \begin{align}
      \frac{A^2}{p} = \frac{sxyz}{2s} = \frac{xyz}{2} \in \mathbb{N}.
    \end{align}
\end{proof}

\begin{theorem}\label{thm1}
 Let $H$ be a triangle like the one in Lemma \ref{lm2}. Then, $z$ divides $2^{2^n}(x+y)^{2^n-1}$.
\end{theorem}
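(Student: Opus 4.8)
\noindent The plan is to squeeze everything out of the single algebraic identity linking consecutive triangles and then push a divisibility all the way around the cycle. Write the cycle as $H_1,\dots,H_n$ in the notation of Lemma~\ref{lm2}, so for each index $j$ (read cyclically) we have $s_j=x_j+y_j+z_j$, $p_j=2s_j$, and by Heron's formula $A_j^2=s_jx_jy_jz_j$. The defining relation $A_j=p_{j+1}=2s_{j+1}$ (which makes sense because, by Lemma~\ref{lm2}, every semi-perimeter is an integer) then gives the master identity
\begin{equation}
4\,s_{j+1}^{\,2}=s_j\,x_j\,y_j\,z_j\qquad(1\le j\le n).
\end{equation}
Since $s_jx_jy_j\in\mathbb{N}$, this yields at once $z_j\mid 4s_{j+1}^{2}$ for every $j$, and likewise $s_j\mid 4s_{j+1}^{2}$; one could equivalently phrase all of this in terms of the perimeters $p_j$.

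\noindent Next I would iterate. Call the triangle in question $H_1$, so we must bound $z_1$. From $z_1\mid 4s_2^{2}$ together with $s_2\mid 4s_3^{2}$ (hence $s_2^{2}\mid 16 s_3^{4}$) we get $z_1\mid 2^{6}s_3^{4}$; feeding in $s_3\mid 4s_4^{2}$ raises this to $z_1\mid 2^{14}s_4^{8}$, and so on. Because $a\mid b$ implies $a^{2}\mid b^{2}$, an easy induction shows that after using the relations at $H_1,H_2,\dots,H_k$ one reaches $z_1\mid 2^{e_k}s_{k+1}^{\,2^{k}}$ for an explicit exponent $e_k$ (the pattern $e_1=2$, $e_2=6$, $e_3=14,\dots$ is $e_k=2^{k+1}-2$). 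Running once around the entire cycle, i.e.\ taking $k=n$ so that $s_{n+1}=s_1$, produces a self-referential divisibility of the shape $z_1\mid 2^{E}\,s_1^{\,2^{n}}$.

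\noindent The final move converts $s_1$ into $x_1+y_1$. Writing $s_1=(x_1+y_1)+z_1$ and expanding by the binomial theorem, every term of $s_1^{\,2^{n}}$ except $(x_1+y_1)^{2^{n}}$ carries a factor $z_1$, so $s_1^{\,2^{n}}\equiv (x_1+y_1)^{2^{n}}\pmod{z_1}$; as $z_1$ divides $2^{E}s_1^{\,2^{n}}$, it also divides $2^{E}(x_1+y_1)^{2^{n}}$, a divisibility of exactly the advertised form.

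\noindent The hard part is making the exponents land precisely on $2^{2^{n}}$ and $2^{n}-1$ rather than on the cruder pair that the bare iteration hands back. I would recover the sharp statement by arguing prime by prime. For an odd prime $q$, the case $v_q(z_1)\le v_q(x_1+y_1)$ is immediate, and in the only remaining case one has $v_q(s_1)=v_q(x_1+y_1)$; keeping the factor $s_1$ on the right of the master identity at the very first step (instead of discarding it) and then telescoping the valuations $v_q(s_j)$ around the cycle gives $v_q(z_1)\le(2^{n}-1)\,v_q(x_1+y_1)$. The prime $q=2$ has to be handled separately, since there the constant $4$ in the master identity feeds into the valuation recursion, and this is what generates the factor $2^{2^{n}}$. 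Getting these two valuation computations to close up exactly is the delicate point; the rest is the elementary propagation sketched above.
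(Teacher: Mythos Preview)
Your overall architecture---iterate the identity $4s_{j+1}^{2}=s_jx_jy_jz_j$ around the cycle and then strip $z_1$ out of $s_1$ via the binomial theorem---matches the paper's. The paper, however, never discards the factors $s_j$ and $x_jy_j$ and never needs a prime-by-prime repair: it uses Lemma~\ref{lm2} from the outset in the equivalent form $A_i^{2}=c_i\,p_i$ with $c_i=x_iy_iz_i/2\in\mathbb{N}$, starts from $p_1=A_n$, and repeatedly squares and substitutes to obtain $p_1^{2^{n}}=c_n^{2^{n-1}}c_{n-1}^{2^{n-2}}\cdots c_2^{2}\,A_1^{2}$, hence $A_1^{2}\mid p_1^{2^{n}}$. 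Writing this as $s_1x_1y_1z_1\mid 2^{2^{n}}s_1^{2^{n}}$ immediately gives $z_1\mid 2^{2^{n}}s_1^{2^{n}-1}$ with the exact exponents, after which the same binomial step you use finishes the proof.

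Your odd-prime valuation argument is correct and does recover the exponent $2^{n}-1$ on $x_1+y_1$: from $2v_q(s_{j+1})\ge v_q(s_j)$ one telescopes $v_q(s_2)\le 2^{n-1}v_q(s_1)$, and then $v_q(z_1)\le 2v_q(s_2)-v_q(s_1)\le(2^{n}-1)v_q(s_1)=(2^{n}-1)v_q(x_1+y_1)$ in the nontrivial case. The genuine gap is at $q=2$. Feeding only ``the constant $4$ in the master identity'' into the recursion yields $v_2(s_j)\le 2+2v_2(s_{j+1})$, which telescopes to $v_2(z_1)\le (2^{n+1}-2)+(2^{n}-1)v_2(s_1)$; this is exactly the crude exponent $E=2^{n+1}-2$ you already had, not $2^{n}$. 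The missing ingredient is precisely the conclusion of Lemma~\ref{lm2}, applied to \emph{every} triangle in the cycle: $x_jy_jz_j/2\in\mathbb{N}$, so $v_2(x_jy_jz_j)\ge 1$ and the recursion sharpens to $v_2(s_j)\le 1+2v_2(s_{j+1})$. That tightened recursion telescopes to $v_2(z_1)\le 2^{n}+(2^{n}-1)v_2(s_1)$, which is the stated bound. Without invoking that extra factor of $2$ at each step---equivalently, without keeping the integers $c_j$ throughout as the paper does---the exponent $2^{2^{n}}$ in the theorem is not established.
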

\begin{proof}
Suppose $H_1, H_2, \dots, H_n$ is an $n$\emph{-sociable} Heronian cycle. Without loss of generality, assume that $H=H_1$. From Lemma \ref{lm2} we know that for each $i$, $A_i^2 = c_i p_i$ for some $c_i \in \mathbb{N}$. Hence
\begin{align*}
 p_1^{2^n} &=\left( A^2_{n}\right)^{2^{n-1}}  = \left(c_{n} p_{n} \right)^{2^{n-1}} = c^{2^n-1}_{n}\left(p^2_{n}\right)^{2^{n-2}} \\
 &=c_n^{2^n-1}\left(A^2_{n-1}\right)^{2^{n-2}} = c^{2^n-1}\left(c_{n-1}p_{n-1} \right)^{2^{n-2}}= \dots \\
 &= c^{2^{n-1}}_{n}c^{2^{n-2}}_{n-1}  \cdots   c_2^2 A_1^2.
\end{align*}
Thus $p_1^{2^{n}}/A_1^2 \in \mathbb{N}$, and we have
\begin{align}\label{inn}
    \frac{p_1^{2^{n}}}{A_1^2}= \frac{2^{2^{n}}s^{2^n}}{sxyz} = 2^{2^n} \frac{s^{2^n-1}}{xyz} \in \mathbb{N} & \Rightarrow 2^{2^n} \frac{s^{2^n-1}}{z} \in \mathbb{N}.
\end{align}  
If $c=x+y$, then by (\ref{inn}) the Binomial Theorem yields
\begin{align*}
2^{2^n}\frac{(c+z)^{2^n-1}}{z} \in \mathbb{N} & \Rightarrow 2^{2^n} \frac{\sum_{k=0}^{{2^n-1}}\binom{2^n-1}{k}z^{k}c^{2^n-1-k}}{z} \in \mathbb{N} \\ &\Rightarrow 2^{2^n} \frac{\sum_{k=1}^{2^n-1} \binom{2^n-1}{k} z^k c^{2^n-1-k}}{z} + 2^{2^n}\frac{c^{2^n-1}}{z} \in \mathbb{N} \\
&\Rightarrow 2^{2^n} \sum_{k=1}^{2^n-1}\binom{2^n-1}{k} z^{k-1}c^{2^n-1-k} + 2^{2^n}\frac{c^{2^n-1}}{z} \in \mathbb{N}.
\end{align*}
The first summand is an integer, which implies that the second summand must also be an integer. This is possible if and only if $z \ | \ 2^{2^n} c^{2^n-1}$, completing our proof.
\end{proof}

\begin{lemma}\label{lm3}
Assume that $H$ is a Heronian triangle such that its perimeter $p$ is greater than its area $A$. Then $x \leq 3$ and $y \leq 9$.
\end{lemma}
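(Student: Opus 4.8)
The plan is to translate the hypothesis $p > A$ into a single clean inequality in the integers $x,y,z$ and then run a very short case analysis. As in the proof of Lemma \ref{lm2}, for any Heronian triangle the quantities $s = x+y+z$ and $x = s-a,\ y = s-b,\ z = s-c$ are positive integers with $z \ge y \ge x \ge 1$, and $A^2 = sxyz$ while $p = 2s$. Since $p$ and $A$ are positive, $p > A$ is equivalent to $p^2 > A^2$, i.e.\ to $4s^2 > sxyz$, i.e.\ to
\[
4(x+y+z) > xyz .
\]
This inequality, together with $1 \le x \le y \le z$, is the only thing I would use.

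First I would extract a bound on the product $xy$. Because $z$ is the largest of the three, $x+y+z \le 3z$, so $12z \ge 4(x+y+z) > xyz$; cancelling $z > 0$ gives $12 > xy$, hence $xy \le 11$. From $x \le y$ we then get $x^2 \le xy \le 11$, and since $x$ is a positive integer this forces $x \le 3$, which is the first assertion.

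For the bound on $y$ I would split on the value of $x$. If $x \ge 2$, then $2y \le xy \le 11$, so $y \le 5 \le 9$ and we are done. If $x = 1$, the bound $xy \le 11$ only gives $y \le 11$, so exactly the cases $y = 10$ and $y = 11$ remain to be eliminated, and here the extra ingredient is the constraint $z \ge y$. Substituting into $4(1+y+z) > yz$: for $y = 10$ we get $44 + 4z > 10z$, so $z \le 7 < 10$, a contradiction; for $y = 11$ we get $48 + 4z > 11z$, so $z \le 6 < 11$, again impossible. Hence $y \le 9$.

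The computations are entirely elementary; the only spot that needs a little care is the case $x = 1$, where the product bound $xy \le 11$ is by itself too weak and one must bring in $z \ge y$ to knock out $y \in \{10,11\}$. (The same argument in fact gives $y \le 8$, but the weaker bound $y \le 9$ is all that is needed downstream.)
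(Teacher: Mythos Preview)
Your argument is correct. The derivation of $4(x+y+z) > xyz$ from $p > A$ is clean (and is exactly inequality (\ref{pa}) of the paper), the bound $xy \le 11$ via $x+y+z \le 3z$ immediately gives $x \le 3$, and the two leftover cases $y = 10, 11$ at $x = 1$ are correctly disposed of using $z \ge y$. Your parenthetical remark that the same reasoning actually yields $y \le 8$ is also right: for $x = 1,\ y = 9$ one gets $40 > 5z$, so $z \le 7 < 9$.

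As for comparison with the paper: the paper does not prove this lemma at all but simply invokes Lemma~3 of \cite{Praton}. Your self-contained argument therefore supplies what the paper outsources; it is the natural elementary proof one would expect in the cited reference, and it has the small bonus of being visibly sharper than the stated bound.
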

\begin{proof}
This follows from Lemma 3 in \cite{Praton}.
\end{proof}

Consequently, Theorem \ref{thm1} implies that there are finitely many $n$-sociable Heronian triangles whose perimeter is greater than their area, since $z$ has to divide finitely many integers; namely the integers $2^{2^n}(x+y)^{2^n-1}$ where $x\leq 3$ and $y \leq 9$. The next theorem shows that there are at most four of them.

\begin{theorem}\label{thm2}
Let $H$ be a Heronian triangle in an $n$\emph{-sociable} cycle such that its perimeter is greater than its area. Then $H$ must be one of the following triangles: $(3,4,5), (5,5, 8), (3, 25, 26)$ and $(3, 865, 866)$
\end{theorem}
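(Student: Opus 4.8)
The plan is to combine the three preceding results. Recall from Lemma~\ref{lm2} that the semiperimeter $s=x+y+z$ is a positive integer, that $x,y,z$ are positive integers with $z\ge y\ge x$, that $A^2=sxyz$, and (from the computation in that proof) that $xyz$ is even. Using $p=2s$ and $A^2=sxyz$, the hypothesis $p>A$ is equivalent to $4s>xyz$, i.e. $4(x+y+z)>xyz$. Lemma~\ref{lm3} gives $x\le 3$ and $y\le 9$, and Theorem~\ref{thm1} forces every prime dividing $z$ to divide $2(x+y)$. The strategy is to use all of these to cut the problem down to a short list of triples $(x,y,z)$ and then, for each, to decide when $(x+y+z)xyz$ is a perfect square.

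First I would dispose of the case $xy>4$. There the inequality $4(x+y+z)>xyz$ rearranges to $z(xy-4)<4(x+y)$, so $z<4(x+y)/(xy-4)$; since also $y\le z$, $x\le 3$ and $y\le 9$, only finitely many pairs $(x,y)$ and, for each, finitely many $z$ remain, and these are further restricted by the condition on the prime divisors of $z$. A routine check then shows that none of the surviving triples makes $(x+y+z)xyz$ a perfect square. Hence every admissible triangle satisfies $xy\le 4$, i.e. $(x,y)\in\{(1,1),(1,2),(1,3),(1,4),(2,2)\}$.

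For these five pairs Theorem~\ref{thm1} pins $z$ down to the form $z=2^a$ when $x+y\in\{2,4\}$, to $z=2^a 3^b$ when $x+y=3$, and to $z=2^a 5^b$ when $x+y=5$. In each case I would determine when $A^2=(x+y+z)xyz$ is a perfect square by comparing the $2$-adic and (where relevant) $3$- or $5$-adic valuations of the two sides. For the ``power of $2$'' pairs $(1,1),(1,3),(2,2)$ this comes down to valuation-parity conditions together with equations of the form $2^m+1=k^2$ (whose only solution is $m=3$, which fails the parity conditions), so these pairs contribute nothing. For $(1,4)$ it reduces to ``$z(z+5)$ is a perfect square'', which, since $\gcd(z,z+5)\mid 5$, forces $z$ and $z+5$ to be coprime squares differing by $5$, hence $z=4$ and the triangle $(5,5,8)$. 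For $(1,2)$ (where $x+y=3$), splitting on the exponent of $3$ in $z$ reduces matters to the exponential Diophantine equations $2^a+3=k^2$, $2^a+1=k^2$, $3^{b-1}-2^{a-2}=1$, $2^{a-2}-3^{b-1}=1$, and $3^{b-1}+1=2m^2$; solving these by elementary $2$- and $3$-adic congruences (the last one being eliminated modulo $3$) leaves exactly $z\in\{3,24,864\}$, i.e. the triangles $(3,4,5)$, $(3,25,26)$, $(3,865,866)$. Collecting all cases gives precisely the four triangles in the statement.

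The step I expect to be the main obstacle is the pair $(x,y)=(1,2)$. Here the perimeter condition alone leaves $z$ unbounded: $A^2=2z(z+3)$ has an infinite family of integer solutions coming from the Pell equation $k^2-2m^2=1$, so Theorem~\ref{thm1} is genuinely needed --- the divisibility $z\mid 2^{2^n}(x+y)^{2^n-1}$ forbids $z$ from having any prime factor other than $2$ or $3$, and this is exactly what collapses that infinite family to its first three members. Carrying this out rigorously amounts to ruling out the remaining exponential Diophantine possibilities listed above, and that is where essentially all the work in the proof lies.
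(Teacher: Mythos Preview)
Your proposal is correct and follows essentially the same strategy as the paper: derive $4(x+y+z)>xyz$, invoke the bounds $x\le 3$, $y\le 9$, use Theorem~\ref{thm1} to restrict the prime factors of $z$, and then reduce the hard pair $(x,y)=(1,2)$ to the same small list of exponential Diophantine equations in powers of $2$ and $3$. Your organization differs cosmetically---you split first on $xy>4$ versus $xy\le 4$, whereas the paper enumerates by $x$ and then $y$, and for $(x,y)=(1,4)$ and $(2,2)$ you argue via $\gcd$/valuation where the paper instead factors $25=(2z+5-A)(2z+5+A)$ and $16=(2z+4-A)(2z+4+A)$---but these are interchangeable elementary devices. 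One small gap to patch: in your $(1,4)$ step, ``$\gcd(z,z+5)\mid 5$ forces $z$ and $z+5$ to be coprime squares'' only covers the case $5\nmid z$; when $5\mid z$ you need the extra line that $(z/5)(z/5+1)$ would then be a square, forcing $z=0$.
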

\begin{proof}
 First, we should note that for $H$, we have
 \begin{align}\label{pa}
   p>A \Rightarrow 2s >A \Rightarrow 4s^2 > A^2 \Rightarrow 4s^2 > sxyz \Rightarrow 4(x+y+z) > xyz. 
 \end{align}
 From Lemma \ref{lm3} it follows that $x=1$, $x=2$, or $x=3$ so we treat each of these cases separately. We assume, as previously, that $x \leq y \leq z$.

 We begin with $x=3$. If $y \geq 4$, then by (\ref{pa}) we have $4(z+z+z) \geq 4(x+y+z) > xyz = 3yz \geq 12z $. Hence $12z > 12z$; a contradiction. So $y=3$, and $z=3$ or $z=4$ since $4(3+3+z) >xyz = 9z$. Neither of these values yield an integer area.
 
 Next, we consider $x=2$. For $y > 5$, we have $xy$ is at least $12$, so $12z > 12z$ and a contradiction. If $y=5$, then $4(2+5+z)>10z$, so $28>6z$ and $z<5$, a contradiction arises again. If $y=2$ then $sxyz=A^2$ which implies $(4+z)4z=A^2$. Hence
$
16 = (2z+ 4 - A)(2z+4+A).
$ After checking all possible factorizations of $16$, we conclude that there are no positive integers solutions for $z$. Supposing $y=3$ we get $z<10$. Then $z=4, 5, 8$ since $z$ divides $2^{2^n}(x+y)^{2^n-1} = 2^{2^n}(5)^{2^n-1}$ by Theorem \ref{thm1}. None of these values yield an integer area. Similarly if $y=4$ then $z<6$, which means $z=4,5$. Neither of these values work either.  

Suppose that $x=1$. By Lemma \ref{lm3}, $y \leq 9$. From (\ref{pa}), we get an upper bound on $z$ such that $z < 4 (y+1)/(y-4)$ when $y>4$. If $y=9$, then $z<9=y$, a contradiction. If $y=8$, then $z=8$ which yields an irrational area. If $y=7$, then $7 \leq z \leq 10$, none of which yield a rational area. If $y=6$ then $ 6 \leq z \leq 13$. The only values of $z$ that divide $2^{2^n} \cdot 7^{2^n-1}$ are $7$ and $8$, but both of them yield an irrational area. Finally if $y=5$ then $ 5 \leq z \leq 23$. The only values of $z$ that divide $2^{2^{n+1}-1}3^{2^n-1}$ are $6, 8, 9, 12, 16$ and $18$. None of these values work either.

We need to take more care for $y \leq 4$. Suppose that $y=4$. Again, we want $4z(5+z)=A^2$ for $A,z \in \mathbb{N}$. As before, we get $25= (2z+5-A)(2z+5+A)$. The only solution is $z=4$ and $A=12$, so $x=1, y=4, z=4$ is a candidate for $H$. If $y=1$ then $z(z+2)=A^2$ which is not possible since $A^2 = z(z+2) = (z+1)^2-1$ and no two perfect squares differ by one except the trivial case $z=0$ and $A=0$. 
    
Next, supposing $x=1$ and $y=3$ we get $3z(z+4)=A^2$. Also note that $z\ |\ 2^{2^n}(1+3)^{2^n-1} \Rightarrow z\ |\ 2^{2^n}4^{2^n-1}$. Therefore, $z=2^a$ for $a \in \mathbb{N}$. Clearly, $z \geq 3$, so $a \geq 2$. Hence $3\cdot 2^a (2^a+4) =3 \cdot 2^{a+2}(1+2^{a-2}) = A^2$. Note that $a$ cannot be odd; otherwise, the quantity $2^{a+2}(1+2^{a-2})$ would have an odd number of twos and hence wouldn't be a perfect square. If $a$ is even, then $1+2^{a-2} = 3c^2$ where $c$ is an integer. Thus $3c^2 = 1+2^{a-2}= 1+4 \cdot 2^{a-4}$ for $a \geq 4$ or $3c^2=2$ for $a=2$. Considering the equations modulo $4$, we see each case is impossible. Hence $y=3$ also leads to a contradiction.
    
Finally, suppose $x=1$ and $y=2$. Then $z$ has to divide $2^{2^n}(1+2)^{2^n-1} = 2^{2^n}3^{2^n-1}$, so $z=2^a3^b$ for some $a,b \in \mathbb{N}$. Furthermore, $A^2= 2z(z+3) = 2\cdot2^a3^b(2^a3^b+3) = 2^{a+1}3^b(2^a3^b+3)$. If $b=0$ we get $2^{a+1}(2^a+3)=A^2$, which implies that $a$ is odd and $2^a+3=c^2$. We can further assume that $a \geq 2$ ($z=2$ does not yield a rational area). The previous expression becomes $4\cdot 2^{a-2}+3=c^2$ which is impossible modulo $4$, so we can suppose that $b \geq 1$ and get $A^2 = 2^{a+1}3^{b+1}(2^a3^{b-1}+1)$.  Now consider $b \geq 2$. Then, $2^a3^{b-1}+1=c^2$ where $a,b$ are odd. Hence $2^a3^{b-1}=(c-1)(c+1)$ which implies $2^p3^q-2^r3^s=2$ where $p+r=a$ and $q+s=b-1$. So $2^{p-1}3^q - 2^{r-1}3^s=1$ where $p-1\geq 0$ and $r-1 \geq 0$; otherwise we have an even number equal an odd number. In turn, this means that $r-1=0$ and $q=0$ ($2^{p-1} -3^s=1$) or $p-1=0$ and $s=0$ ($3^q-2^{r-1}=1$). Otherwise the difference is either even or a multiple of three. Since $b \geq 2$, either $s \geq 1$ or $q \geq 2$. The solution to such equations is well-known and first proven by the medieval mathematician Gersonides \cite{gersonides} using arithmetic modulo $8$. The only solution satisfying our conditions is $q=2$, $r-1=3$ or $p-1=2, s=1$. The latter yields $a=4$ and $b=2$ which is not possible because $a$ and $b$ are both odd. The former yields $a=5$ and $b=2$ so $z=864$. Assuming $b=1$ we have $A^2=2^{a+1}3^2(2^a+1)$. This implies that either $a=0$ which yields $z=3$ or that $a$ is odd and $2^a+1=c^2$ for some $c \in \mathbb{N}$. This in turn implies that $a=3$ and $z=24$ using a similar argument as before.
    
We have just shown that the only possible values of $x, y, z$ are $x=1, y=4, z=4$, or $x=1, y=2, z=3$, or  $x=1, y=2, z=24$, or $x=1, y=2, z=864$. These values yields the following triangles: $ (5,5,8), (3,4,5), (3, 25, 26)$ and $(3, 865, 866)$.

\end{proof}

From Lemma \ref{lm1} we know that each $n$\emph{-sociable} Heronian cycle must contain a triangle whose perimeter is greater than its area. Hence, theorem \ref{thm2} provides us with a possibility of characterizing all $n$\emph{-sociable} Heronian cycles. The next step is to investigate each of these four triangles and see whether a sociable cycle contains them, and if so, what kind of other triangles must it contain. It turns out that three of the four cannot be part of this cycle, except for the one that belongs to our amicable pair. 

\begin{theorem}\label{thm3}
There is a unique triangle $H$ that belongs to a $n$-\emph{sociable} cycle such that its perimeter is greater than its area; namely, the triangle with side lengths $3$, $25$ and $26$. Moreover, each $n$-\emph{sociable} cycle is composed only of the amicable pairs and the triangle $(9, 10, 17)$. 
\end{theorem}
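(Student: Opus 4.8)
The plan is to eliminate three of the four triangles of Theorem~\ref{thm2}, so that every cycle is forced to contain $(3,25,26)$, then use a monotonicity argument to bound every perimeter by $60$, and finish with a finite local search. I would start with the two small cases. In a cycle, any triangle with $p>A$ has a \emph{successor} whose perimeter equals its area. Since $(3,4,5)$ has area $6$ and there is no Heronian triangle of perimeter $6$ (the only shape of semi-perimeter $3$ is $(2,2,2)$, with area $\sqrt3$), the triangle $(3,4,5)$ cannot occur in a cycle; since $(5,5,8)$ has area $12$ and a check of the partitions of $6$ shows the only Heronian triangle of perimeter $12$ is $(3,4,5)$, which is already excluded, $(5,5,8)$ cannot occur either.

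The crucial tool, which both eliminates $(3,865,866)$ and organizes everything afterwards, is a monotonicity principle. Split the triangles of a cycle into those with $p>A$, those with $p=A$ (the five equable triangles, all of perimeter at most $60$), and those with $p<A$. Along any maximal run of triangles with $p<A$ the perimeters strictly increase, since $A_i=p_{i+1}>p_i$; by Lemma~\ref{lm1} such a run cannot be the whole cycle, so it ends at a triangle with $p\ge A$, whose perimeter is at most the largest perimeter available to such a triangle, namely $\max(60,1734)=1734$ by Theorem~\ref{thm2}. Hence every perimeter in a cycle is at most $1734$. Now suppose $(3,865,866)$ lies in a cycle. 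Its successor has perimeter $1224$, which is the perimeter of no equable triangle and of none of the four triangles of Theorem~\ref{thm2}, so that successor has $p<A$; following the resulting run, the perimeters strictly increase and remain at most $1734$, so the run ends at a triangle with $p\ge A$ of perimeter in $(1224,1734]$, and the only such triangle is $(3,865,866)$ itself. Hence the last triangle $Q$ of that run has area $1734$ and perimeter in $[1224,1734)$. But Lemma~\ref{lm2} forces the perimeter of $Q$ to divide $1734^2=2^2\cdot 3^2\cdot 17^4$, and this integer has no divisor in $[1224,1734)$ (in particular $1224=2^3\cdot 3^2\cdot 17$ does not divide it), a contradiction. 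So $(3,865,866)$ is also excluded.

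With only $(3,25,26)$ surviving from Theorem~\ref{thm2}, Lemma~\ref{lm1} shows every cycle contains $(3,25,26)$ and that it is the unique shape of the cycle with $p>A$. Re-running the monotonicity principle now bounds every perimeter by $\max(60,54)=60$, hence every area — being the next perimeter — by $60$ as well, so the whole cycle lives among Heronian triangles of perimeter and area at most $60$. I would then analyze locally from $(3,25,26)$ ($p=54$, $A=36$): its successor has perimeter $36$ and area at most $60$, and the candidates are $(9,10,17)$, $(10,10,16)$, $(9,12,15)$, $(10,13,13)$. Here $(10,10,16)$ is impossible since no Heronian triangle of perimeter $48$ has area at most $60$, and $(10,13,13)$ is impossible since the only Heronian triangle of perimeter $60$ with area at most $60$ is the equable $(6,25,29)$, whose only admissible successor is again $(6,25,29)$, forcing an all-equable cycle. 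Likewise the successor of $(9,12,15)$ must have perimeter $54$ and area at most $60$, and the only such triangle is $(3,25,26)$; and the successor of $(9,10,17)$ lies in the same four-element list, hence is $(9,10,17)$ or $(9,12,15)$. Thus, starting from the $(3,25,26)$ that every cycle contains and following successors, only $(3,25,26)$, $(9,12,15)$, $(9,10,17)$ ever appear, and the transition rules force the cyclic word to be a positive number of blocks $(9,12,15),(3,25,26)$ — that is, copies of the amicable pair — with a possibly empty string of $(9,10,17)$'s inserted after each $(3,25,26)$. This is exactly the asserted characterization, and for each $n$ it yields only finitely many cycles.

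I expect the elimination of $(3,865,866)$ to be the main obstacle: it is the one place where one must combine the global perimeter bound with Lemma~\ref{lm2} and notice that $1734^2$ has no divisor in the relevant window. The enumerations of Heronian triangles of perimeter $12$, $36$, $48$, $54$, $60$ with small area are routine but must be carried out carefully so that no triangle is overlooked.
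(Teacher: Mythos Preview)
Your argument is correct, and it takes a route that differs from the paper's in two places. For $(3,865,866)$ the paper works \emph{backwards}: it finds that the unique Heronian triangle of area $1734$ is $(51,68,85)$ (perimeter $204$), that the unique one of area $204$ is $(17,25,26)$ (perimeter $68$), and that no Heronian triangle has area $68$, so the chain of predecessors terminates. Your monotonicity-plus-divisibility argument is slicker here: you never have to enumerate triangles of area $1734$ or $204$, only to observe that $1734^2=2^2\cdot 3^2\cdot 17^4$ has no divisor in $[1224,1734)$. For the final structure the paper again reasons through predecessors (the only Heronian triangle of area $54$ is $(9,12,15)$; the only ones of area $36$ are $(3,25,26)$ and $(9,10,17)$), while you first extract the global bound $p\le 60$, then run a forward successor search constrained to area $\le 60$. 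Both approaches are sound; yours trades a few extra enumerations (perimeters $48$ and $60$) for a cleaner structural picture, while the paper's predecessor chase avoids the global bound entirely but requires identifying all Heronian triangles of a given area rather than a given perimeter. One small wording point: in ruling out $(10,13,13)$ you say it ``forces an all-equable cycle''; more precisely, once you reach $(6,25,29)$ the only admissible successor is $(6,25,29)$, so the cycle would consist entirely of that triangle, contradicting the presence of $(3,25,26)$.
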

\begin{proof}
 Suppose $H$ belongs to an $n$-\emph{sociable} cycle such that its perimeter is greater than its area, and assume it is the first member of our cycle i.e., $H=H_1$. Then there are finitely possibilities for $H_1$ as listed in Theorem \ref{thm2}. Let $H_1$ be the triangle with side lengths $(5,5,8)$. This triangle has area $12$ and perimeter $18$. The semi-perimeter of the successor triangle $H_2$, if it exists, must be $6$. So for the successor triangle $x=1,y=1, z=4$ or $x=1,y=2,z=3$, or $x=2, y=2, z=2$. The first and third do not produce integer areas and the second triangle is our other candidate; namely, the triangle with side lengths $(3, 4, 5)$. The successor of this triangle must have perimeter $6$, and there is no Heronian triangle with such perimeter by a similar argument as above. Note that these arguments are the same as the ones used in \cite{Praton}. The triangle with side lengths $(3, 865, 866)$ needs to be considered with more care. Since this triangle has perimeter $1734$, then the preceding triangle $H_n$ must have area $1734= 2 \cdot 3 \cdot 17^2$. So, for the preceding triangle, if it exists, $2^2\cdot3^2\cdot17^4 = sxyz$. The fact that $s=x+y+z$ helps cut down the cases considerably and we find that the only possible values are $z=51, y=34$ and $x=17$. Hence $H_{n}$ has perimeter $204$. So, $H_{n-1}$ must have area $204$, which by a similar argument leads to the triangle $z=17, y=9$, and $x=8$ with perimeter $68$. This implies $H_{n-2}$ has area $68$, but there is no such Heronian triangle. Since this process stops, we cannot have a $n$-\emph{sociable} cycle that contains the triangle $(3, 865, 866)$. 
 
 The only triangle left is the triangle with side lengths $(3,25,26)$. This triangle has area $36$ and perimeter $54$. Similar computations show that there is only one triangle with area $54$, namely, the triangle $(9, 12, 15)$. This is the amicable pair found in \cite{Praton}. Therefore the only triangle that precedes $(3, 25, 26)$ is the triangle $(9, 12, 15)$ that has perimeter $36$. The same kind of computations show that the only triangles with area $36$ are $(3, 25, 26)$ and $(9, 10, 17)$. Thus, the only possibilities of the predecessor of $(9, 12, 15)$ or $(9, 10, 17)$ are $(9,10,17)$ or the amicable pair. By Lemma \ref{lm1}, all $n$\emph{-sociable} Heronian cycles must contain the triangle $(3, 25, 26)$ since this is the only sociable triangle whose perimeter is greater than its area. This concludes the fact that each $n$\emph{-sociable} cycle is only composed of amicable pairs and the triangle $(9, 10, 17)$.
 \end{proof}
 
 \section*{Conclusion}
 
 Let $v$ represent the triangle $(3, 25, 26)$, $u$ represent the triangle $(9, 12, 15)$, and $w$ represent the triangle $(9, 10, 17)$. Theorem \ref{thm3} states that any Heronian cycle is only composed of $u,v$ and $w$. All cycles must contain $v$, which must be preceded by $u$. Hence, $u$ and $v$ come in pairs. Additionally, a pair $u, v$ is proceeded or succeeded by the same pair or by $w$. We immediately construct a Heronian cycle as follows: 
 \begin{align}\label{ff}
   \underbrace{[u, v, w, w, \dots, w]}_{n},  
 \end{align} 
 where the $i$th component denotes the $i$th triangle. Note that (\ref{ff}) represents all cycles that have exactly one pair $u ,v $ since Heronian cycles are equivalent with respect to rotation; for example, $[u,v,w,w,w]$, $[w, w, u,v, w]$ and $[w, w, w, u, v]$ all represent the same cycle.
\begin{figure}[hbt]
    \centering
    \includegraphics[width=4.5in]{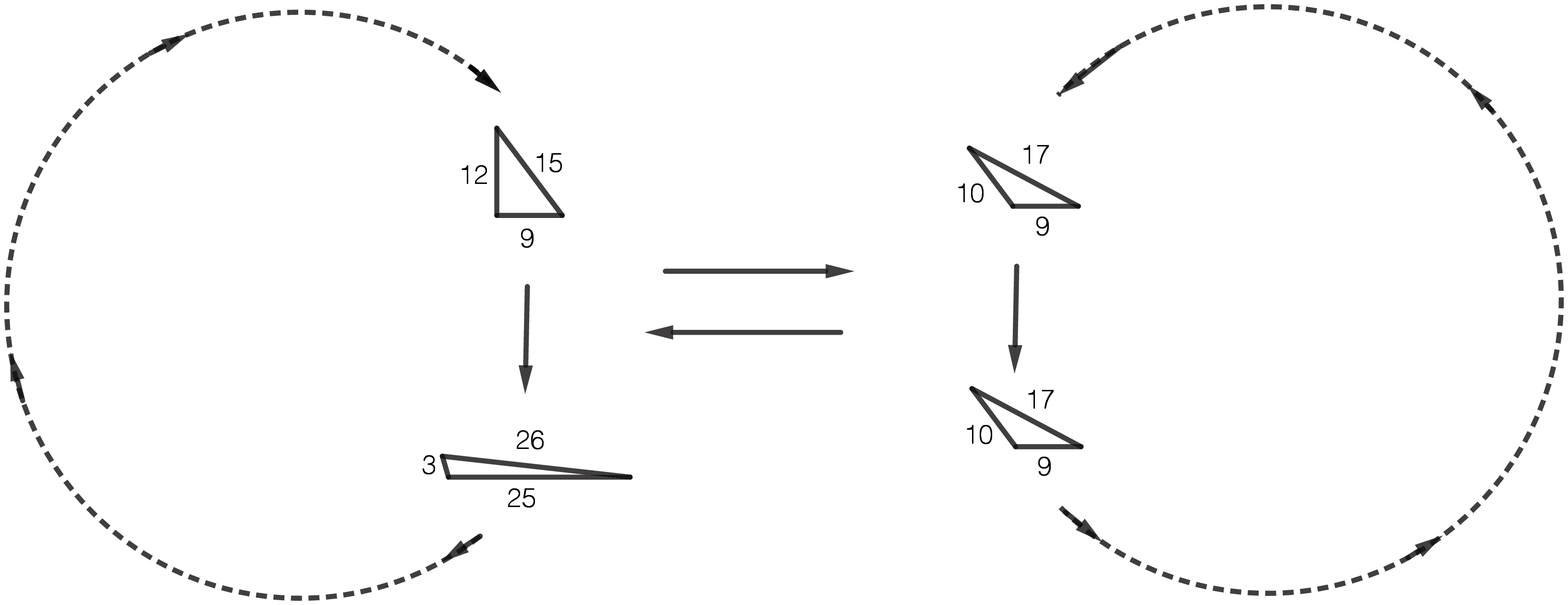}
    \caption{We can replace each \emph{amicable} pair with a pair of equable triangles $(9, 10, 17)$. This is because the preceding triangle of the pair $(9, 12, 15) \to (3, 25, 26)$ must have perimeter $36$ and similarly the successor triangle must have area $36$}
    \label{eq}
\end{figure}
Next, we can replace a pair of equable triangles with our amicable pair (see figure \ref{eq}). Thus, we can form an additional $(n-2)/2$ (if $n$ is even) and $(n-3)/2$ (if $n$ is odd) Heronian cycles by successive replacements in each step,
\begin{align*}
    \underbrace{[u, v, u, v, \overbrace{w, \dots, w}^{n-4}]}_{n}, \underbrace{[u, v, u, v, u, v, \overbrace{w \dots w}^{n-6}]}_{n}, \dots
\end{align*}
 Starting with our cycle in (\ref{ff}) and applying this process we get $\lfloor n/2 \rfloor$ $n$\emph{-sociable} distinct Heronian cycles for each $n$. However, this list does not contain all $n$\emph{-cycles} as for instance $[u, v, u, v, w, w]$ is a different cycle from $[u, v, w, u, v, w]$. Nevertheless, we can use this process to identify all $n$\emph{-cycles} for each $n$ by considering all the possibilities that arise from having $k$ pairs $u, v$ and $n-2k$ $w$s for $1 \leq k \leq \lfloor n/2 \rfloor$. For example, there are only two $5$-\emph{sociable} Heronian cycles: $[u, v, u, v, w]$ and $[u, v, w, w ,w]$, as shown in figure \ref{5sociable} .

\begin{figure}[hbt]
    \centering
    \includegraphics[width=4.5in]{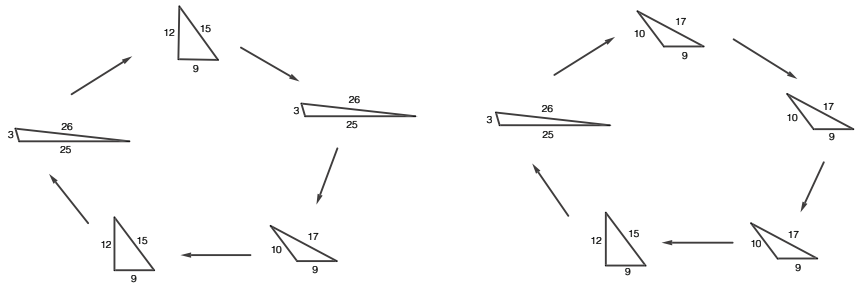}
    \caption{All possible $5$-\emph{sociable} cycles.}
    \label{5sociable}
\end{figure}

One can claim that there essentially are no non-trivial $n$-\emph{sociable} cycles except for $n=2$. All of the other cycles are formed from \emph{amicable} pairs and the equable triangle with area and perimeter $36$. Surprisingly, the complete characterization of $n$-\emph{sociable} cycles turns out to be provable by only using classical number theory and elementary methods as opposed to some more complex methods such as those involving algebraic number theory used in \cite{Hirakawa} for other questions on Heronian triangles.

\section*{About the author:}
   Nart Shalqini is an international student from Kosovo; a young and small country in the Balkans. He was born in Istanbul in 1999 and it is his favorite place to visit. Besides Mathematics, Nart enjoys playing board games and reading mystery novels in his spare time.

\subsection*{Primus Scriber}
   Franklin and Marshall College \\
   Lancaster, Pennsylvania, 17603 \\
   nshalqin@fandm.edu

\end{document}